\def\Nset{\mathbb{N}}
\newtheorem{theorem}{Theorem}
\newtheorem{lemma}{Lemma}
\newtheorem{prop}{Proposition}
\date{}
\begin{document}

\title[M/G/1 Models with Admission Controls]{The Rate of Convergence to Stationarity for M/G/1 Models with Admission Controls via Coupling}
\author{Martin Kolb}
\address{University of Warwick, Department of Statistics,
Coventry, CV4 7AL, United Kingdom}
\email{M.Kolb@Warwick.ac.uk}

\author{Wolfgang Stadje}
\address{Institute  of Mathematics, University of Osnabr\"uck, Albrechtstr. 28A, 49076 Osnabr\"uck, Germany}
\email{wolfgang@mathematik.Uni-osnabrueck.de}

\author{Achim W\"ubker}
\address{Institute  of Mathematics, University of Osnabr\"uck, Albrechtstr. 28A, 49076 Osnabr\"uck, Germany}
\email{awuebker@Uni-Osnabrueck.de}

\maketitle

\begin{abstract}
We study the workload processes of two restricted M/G/1 queueing systems: in Model 1  
any service requirement that would
exceed a certain capacity threshold is truncated; in Model 2 
new arrivals do not enter the system if they have to wait more than a fixed threshold time in line. 
For Model 1 we obtain several 
results concerning the rate of convergence to equilibrium. 
In particular we derive uniform bounds for geometric ergodicity with respect to certain subclasses. 
However, we prove that for the class of all Model 1 workload processes there is no uniform bound. 
 For Model 2 we prove that geometric ergodicity follows from the finiteness of the moment-generating function of 
the service time distribution and derive bounds for the
convergence rates in special cases. The proofs use the coupling method.  
\end{abstract}
\section{Introduction}
In this paper we consider the long-run behavior of the workload processes $V_t$ of the two most important 
$M/G/1$ queueing systems with admission restrictions. 
We are interested in the rate of convergence toward the equilibrium (stationary) distribution $\pi$ 
and measure this rate in terms of the total variation distance, which is defined as
\begin{equation}\label{main_quantity}
d(x,t)=||\mathbb{P}_x \bigl(V_t \in\cdot\bigr)-\pi||_{TV}=\sup_{A\in\mathcal{B}}|\mathbb{P}_x 
\bigl(V_t\in A\bigr)-\pi(A)|,
\end{equation}
where of course $\mathbb{P}_{x}\bigl(V_t\in A\bigr)= \mathbb{P}\bigl(V_t\in A \mid 
V_0=x\bigr)$ and $\mathcal{B}$ is the Borel 
$\sigma$-field in $\mathbb{R}_+$.
The main purpose of this paper is the investigation of $d(x,t)$ as  
$t\rightarrow\infty$ for two $M/G/1$-type models. Let 
$T_{n}$ be the arrival time of the $n$th customer at the queue and $T_{0}=0$. 
The inter-arrival times $I_i=T_i-T_{i-1},i\in\mathbb{N}$, are assumed to be  i.i.d. and 
exponential with mean $1/\lambda$.  
Let $S_n$ be the service requirement of the $n$th customer; $(S_n)_{n\in\mathbb{N}}$ 
is assumed to be an i.i.d. sequence with common distribution $G$. 
\\
 
{\bf Model I: truncated service at the capacity limit}. 
 The workload process $V^{1,x}_t$ of this $M/G/1$ queue in a system with capacity 1
 is formally defined by 
\begin{equation}\label{M/G/1}
V^{1,x}_t=\left\{\begin{array}{cc}
x & t=0\\
\max[V^{1,x}_{T_{n-1}}-(t-T_{n-1}),0],& T_{n-1}\le t<T_n,n\ge 1\\
V^{1,x}_{T_n -}+ \min [S_n + V^{1,x}_{T_n -},1], &t=T_n,n\ge 1
\end{array}
\right.
\end{equation}
This model, which has been referred to as the 
"\textit{truncated service policy}" in the literature (see e.g. \cite{PeStaZa}), can be described 
as follows: whenever the 
total workload would increase beyond the capacity threshold, it is reduced such that this threshold is exactly 
reached but not exceeded. Note that under this rule every customer is admitted to the system.  
\\
 
{\bf  Model 2: bounded waiting time policy}. 
In the second model  
new arrivals whose waiting time
in line would exceed some constant are not admitted to enter the system. 
According to this policy, admission is interrupted 
as long as the workload process stays above the threshold, say 1. The workload process is thus given by   
\begin{equation}\label{defiV2}
V^{2,x}_t=\left\{\begin{array}{cc}
x & t=0\\
\max[V^{2,x}_{T_{n-1}}-(t-T_{n-1}),0],& T_{n-1}\le t<T_n,n\ge 1,\\
V^{2,x}_{T_n -}+S_{n}\mathbf{1}_{\{V^{2,x}_{T_n-}<1\}},&t=T_n,n\ge1. 
\end{array}
\right.
\end{equation}
Note that the distribution of $V^2_{x_t}$ has support $[0,\infty)$ if $G$ has unbounded support. 
\\

A comprehensive account of Model 1 for interarrival and service time distributions with rational 
Laplace-Stieltjes
transforms (LSTs) was already 
given by Cohen in his monograph \cite{Co} (Ch. III.5). 
His method is based on Pollaczek's classical contour integral equation which, in the
case of rational LSTs, leads to explicit, albeit very complicated formulas.
 In \cite{PeStaZa} the busy period distributions in the  $M/G/1$ and in the $G/M/1$ case 
are derived directly in terms of certain 
transforms of the underlying distributions. 
 Early papers on the waiting times in Model 2  
 are \cite{Da,C1,LT,GS,Ho}. In the more general context of
queues with state-dependent arrival and service rates some aspects of 
restricted $M/G/1$ queues were investigated in \cite{GK}. For other 
related models (e.g. partial refusal of overload work) see \cite{BPSZ}.  

Investigations concerning the rate of convergence to equilibrium for queueing systems have a long history, see 
e.g. \cite{KaMc2, Ca2, Ch, StPa, VaZe1, VaZePa, GaGo}. 
 Much of this work is based on the 
 spectral representation for birth and death processes due to Karlin and McGregor \cite{KaMc1}, 
whose application requires 
 exponentially distributed  service times,
so that this technique works well for $M/M/1$, $M/M/n$ and $M/M/\infty$, but 
is not applicable to $M/G/1$-type queues.

Our approach is based on the {\it coupling method}, 
which turns out to be flexible enough for dealing with general service distributions.   
In \cite{Th1, Th2, LuMeTw} coupling 
has been used to estimate convergence rates to equilibrium for standard $M/G/1$ queues without boundary modifications, 
but  
our construction is different.  
To the best of our knowledge,  convergence rates for the processes $V_t^{1,x}$
and $V_t^{2,x}$ defined above have not yet been derived.

The paper is organized as follows. 
In Section 2 we analyze the asymptotic behavior of $V_t^{1,x}$ for  
$t\rightarrow \infty$.  
We determine the density $\tilde{\pi}$ of the invariant distribution 
$\pi$ and give a new
formula for the distribution function of $\pi$. (Another expression was 
derived in \cite{Co} and \cite{Da} by different methods.)  
Then the general coupling method and the associated coupling inequality 
that will be used in this paper is presented.   
We show uniform ergodicity  with respect to the arrival rate  
 and to $G\in \mathcal{G}_{\rho,p}=\{G\in\mathcal{G}:G[\rho,\infty)\ge p\}$) 
($\rho, p >0 $ fixed) and also with respect to all service time distributions for fixed $\lambda>0$.  
However, uniformity fails to hold  over all $\lambda$ and $G$. 
At the end of Section 2 we discuss two examples. 
Section 3 is devoted to Model 2. We derive the invariant density, 
prove that geometric ergodicity follows from the finiteness of the moment-generating function of 
the service time distribution, and derive a bound for the
convergence rate in the case of bounded service times.
  
\section{Analysis of Model 1}\label{section_1}  
\subsection{The invariant distribution}
The Markov process 
$V_t^{1,x}$ is geometrically ergodic  
 and therefore has an uniquely 
determined invariant distribution $\pi$ 
satisfying  
\begin{equation}
d(x,t)=||\mathbb{P}\bigl(V_t^{1,x}\in \cdot\bigr)-\pi||_{TV}\le C_{x,\alpha} \exp(-\alpha t), \ \ \ t\ge 0,\ x\in[0,1] 
\end{equation}
for certain constants $\alpha >0$ and $C_{x,\alpha}>0$. 
To see this,
let $\tilde{T}_i$ be the time of the $i$th arrival of $V_t^{1,x}$ to $1$. Clearly 
$(V_{\tilde{T}_i+t}^{1,x})_{t\ge 0}$ has the same distribution
for all $i\in\mathbb{N}$, i.e., $1$ is a regenerative point. It follows from the general theory 
of regenerative processes (see e.g. \cite{As}, Ch. 6) that
if 
\begin{equation}\label{y1} Y_1=\tilde{T}_2-\tilde{T}_1
\end{equation}
is spread-out and $\mathbb{E}\bigl(Y_1\bigr)<\infty$, then the Markov process 
$V_t^{1,x}$ is geometrically ergodic with uniquely determined invariant distribution $\pi$. In our case 
 the spread-out condition as well as the finiteness of the 
expectation of $Y_1$ are clearly satisfied. Of course, $\pi$ is also the  asymptotic distribution 
of $V^{1,x}_t$ as $t\to \infty$ (see e.g. \cite{MeTw}, \cite{Nu}).

The invariant measure can be immediately written down in the form  
\begin{equation}\label{PiwithY}
\pi(A)=\frac{1}{\mathbb{E}_{1}\bigl(Y_1\bigr)}\int_{0}^{\infty}
\mathbb{P}\bigl(V_t^{1,1}\in A \mid Y_1>t\bigr)\mathbb{P}_{1}\bigl(Y_1>t\bigr)dt. 
\end{equation} 
Eq. \eqref{PiwithY} expresses $\pi$ in terms of the transient distributions 
of $V_t^{1,1}$; it is not very useful for explicit computations (except possibly for simulations).  
A formula expressing $\pi$ in terms of the system primitives $\lambda$ and $G$ is also well-known 
(see \cite{Co} and \cite{Da}): we have for the invariant distribution function 
\begin{equation}\label{Pi}
\pi(x)=\frac{\sum_{n=0}^{\infty}\int_{0}^{x}\dfrac{e^{\lambda(x-u)}[-\lambda(x-u)]^n}{n!}
dG_{n}(u)}{\sum_{n=0}^{\infty}\int_{0}^{1}\dfrac{e^{\lambda(x-u)}[-\lambda(x-u)]^n}{n!}dG_{n}(u)},\,\,\,0\le x\le 1,
\end{equation}
where $G_{n}$ is the $n$fold convolution of $G$ with itself and $\pi(x)$ is an abbreviation for $\pi[0,x]$.\\

A quick and neat direct approach leading to the {\it density} $\bar{\pi}$ of $\pi$ on $(0,1]$, 
and then via integration also to a new explicit formula for $\pi (x)$, is as follows. 
By the standard 
level crossing technique (see e.g. \cite{brill}),   $\bar{\pi} (x) $ is equal to the downcrossing rate of level $x$, 
which in turn is equal to the upcrossing rate of $x$. An upcrossing of $x$ occurs if for some $y\in  [0,x)$ 
a customer with a service requirement of size larger than $x-y$ arrives and the current workload is equal to $y$. Hence, 
setting $\bar{G}(x)=1-G(x)$,  
\begin{equation}\label{flow_out}
\bar{\pi} (x)=\int_{0}^{x}\mathbb{P}\bigl(S_1>x-y\bigr)\lambda\pi(dy)=\lambda\pi(0)\bar{G}(x)+\lambda 
(\bar{G}\ast\bar{\pi})(x).
\end{equation}
Iteration yields, for every $n\in \mathbb{N}$, 
\begin{eqnarray}\label{tildepi}
\bar{\pi}(x)&=&\lambda\pi(0)\bar{G}(x)+\lambda\bar{G}\ast(\lambda\pi(0)\bar{G}+\lambda\bar{G}\ast\bar{\pi})(x)\nonumber\\
&=&\ldots
=\pi(0)\sum_{i=1}^{n}\lambda^{i}\bar{G}^{\ast i}(x) + \lambda^n(\bar{G}^{\ast n}\ast\bar{\pi})(x). 
\end{eqnarray} 
Since the left-hand side of \eqref{tildepi} is finite and all terms are nonnegative 
it follows that $\sum_{i=1}^{\infty}\lambda^{i}\bar{G}^{\ast i}(x) <\infty$ and, consequently, 
$\lim_{n\to \infty} \lambda^n(\bar{G}^{\ast n}\ast\bar{\pi})(x)=0$. We thus obtain 
\begin{eqnarray}\label{tildepi1}
\bar{\pi}(x)&=& \pi(0)\sum_{i=1}^{\infty}\lambda^{i}\bar{G}^{\ast i}(x).  
\end{eqnarray} 
$\pi(0)$ can   be computed  by taking the integral on both sides:
\begin{equation}
1-\pi(0)= \pi(0)\sum_{i=1}^{\infty}\lambda^{i}\int_{0}^1\bar{G}^{\ast i}(x)dx.
\end{equation}
This yields
\begin{equation}\label{pi_0}
\pi(0)=\frac{1}{1+\sum_{i=1}^{\infty}\lambda^{i}\int_{0}^1\bar{G}^{\ast i}(x)dx}.
\end{equation} 
We have proved 
\begin{theorem}\label{invariant_density}
The density $\bar{\pi}$ of the invariant distribution $\pi$ for $x\in (0,1]$ is given by
\begin{equation}\label{invariant_density1}
\bar{\pi}(x)=\frac{1}{1+\sum_{i=1}^{\infty}\lambda^{i}\int_{0}^1\bar{G}^{\ast i}(x)dx}\sum_{i=1}^{\infty}\lambda^{i}\bar{G}^{\ast i}(x)
\end{equation}
and we have
\begin{equation}
\pi(x)=\frac{1+\sum_{i=1}^{\infty}\lambda^{i}\int_{0}^{x}\bar{G}^{\ast i}(y)dy}{1+\sum_{i=1}^{\infty}\lambda^{i}\int_{0}^1\bar{G}^{\ast i}(x)dx}.
\end{equation}
\end{theorem}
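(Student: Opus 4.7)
The plan is to exploit the level-crossing identity to derive a Volterra-type integral equation for the density $\bar{\pi}$ on $(0,1]$, solve it by iterated substitution in the cone of nonnegative functions, and then fix the atom $\pi(\{0\})$ via normalization.

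First I would justify that $\pi$ decomposes as $\pi=\pi(0)\delta_0+\bar{\pi}(x)\,dx$ on $[0,1]$, with no atom at the capacity $1$ since every jump to $1$ is immediately followed by the deterministic downward drift of unit speed. Then, by the standard level-crossing (rate-conservation) argument, for each $x\in(0,1)$ the stationary downcrossing rate of $x$ equals $\bar{\pi}(x)$ because the process moves down at unit speed between arrivals, and this rate must coincide with the upcrossing rate. An upcrossing at $x$ occurs precisely when an arrival in state $y\in[0,x)$ brings a jump of size exceeding $x-y$; since arrivals are Poisson of rate $\lambda$ and PASTA applies, the upcrossing rate equals $\lambda\pi(0)\bar{G}(x)+\lambda(\bar{G}\ast\bar{\pi})(x)$, which is exactly the integral equation \eqref{flow_out}.

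Iterating the identity $\bar{\pi}=\lambda\pi(0)\bar{G}+\lambda\bar{G}\ast\bar{\pi}$ gives for every $n\in\mathbb{N}$ the expansion
\[
\bar{\pi}(x)=\pi(0)\sum_{i=1}^n\lambda^i\bar{G}^{\ast i}(x)+\lambda^n(\bar{G}^{\ast n}\ast\bar{\pi})(x).
\]
The main obstacle is to pass to the limit $n\to\infty$ without an a priori contraction estimate on the operator $f\mapsto\lambda\bar{G}\ast f$: note that no assumption forces the traffic intensity $\lambda\,\mathbb{E}S_1$ below $1$, since the truncation alone guarantees stability. The clean workaround, which exploits that every term in the expansion is nonnegative and that the left-hand side is finite, is to observe that the partial sums $\pi(0)\sum_{i=1}^n\lambda^i\bar{G}^{\ast i}(x)$ are bounded above by $\bar{\pi}(x)$; hence $\sum_{i\ge 1}\lambda^i\bar{G}^{\ast i}(x)$ converges pointwise and the remainder $\lambda^n(\bar{G}^{\ast n}\ast\bar{\pi})(x)$ is forced to tend to $0$. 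This yields $\bar{\pi}(x)=\pi(0)\sum_{i=1}^\infty\lambda^i\bar{G}^{\ast i}(x)$.

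Finally, $\pi(0)$ is recovered from the normalization $\pi(0)+\int_0^1\bar{\pi}(x)\,dx=1$, which after substituting the series is linear in $\pi(0)$ and produces the reciprocal expression announced in \eqref{invariant_density1}; integrating $\bar{\pi}$ on $[0,x]$ and adding the atom at $0$ then gives the claimed formula for $\pi(x)$.
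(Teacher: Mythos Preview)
Your proposal is correct and follows essentially the same route as the paper: the level-crossing identity \eqref{flow_out}, its iteration to the expansion \eqref{tildepi}, the nonnegativity/finiteness argument to kill the remainder, and the normalization \eqref{pi_0} are exactly the steps the authors use. Your added remarks on the absence of an atom at $1$ and the irrelevance of the traffic-intensity condition are helpful elaborations but do not change the underlying argument.
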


\subsection{The rate of convergence to equilibrium and the coupling inequality}\label{speed} 
We now prove that the process $V_t^{1,x}$ is uniformly 
geometrically ergodic, i.e., there exist constants $\alpha>0$ and $C=C_{\alpha}\in\mathbb{R}_{+}$  such that
\begin{equation}\label{uniform_ergodicity}
d(t):=\sup_{x\in[0,1]}d(x,t)=\sup_{x\in[0,1]}||\mathbb{P}\bigl(V_t^{1,x}
\in \cdot\bigr)-\pi||_{TV}\le C_{\alpha} \exp(-\alpha t).
\end{equation} 
In this case, 
\[
\rho:=\limsup_{t\rightarrow\infty}d(t)^{\frac{1}{t}}\le e^{-\alpha}<1
\]
and the quantity $1-\rho$ is called the {\it spectral gap}.
We say that $V_t$ satisfies the 
spectral gap property (SGP) if $\rho < 1$. 
Different characterizations of uniform and geometric ergodicity can be found in the monograph \cite{MeTw}.
For birth and death processes, sufficient conditions for geometric ergodicity are 
established in \cite{Va1} and a characterization
of uniform ergodicity in terms of the birth and death rates can be found in \cite{Ma} and the references therein.

 Let us describe the coupling method that will be used here. It can be easily established that     
 \begin{equation}\label{dt}
\sup_{x\in[0,1]}||\mathbb{P}\bigl(V_t^{1,x}\in 
\cdot\bigr)-\pi||_{TV}\le \sup_{x,y\in[0,1]}||\mathbb{P}\bigl(V_t^{1,x}
\in \cdot\bigr)-\mathbb{P}\bigl(V_t^{1,y}\in \cdot\bigr)||_{TV}=:\bar{d}(t).
\end{equation}
From the triangle inequality it follows that 
\begin{equation}\label{barbound}
\frac{\bar{d}(t)}{2}\le d(t)\le \bar{d}(t),
\end{equation} 
and thus the asymptotics of $d(t)$ can be obtained from by those of $\bar{d}(t)$. There are two main reasons for
considering $\bar{d}(t)$ instead of $d(t)$.   
First, it is known that $\bar{d}(t)$ is sub-multiplicative (see \cite{LePeWi}) in the sense that
\begin{equation}\label{subm}
\bar{d}(t+s)\le \bar{d}(t)\bar{d}(s).
\end{equation} 
Second, $\bar{d}(t)$ can be studied without any knowledge of $\pi$, although it determines the convergence rate of
$\mathbb{P}\bigl(V_t^{1,\cdot}\in \cdot\bigr)$ to $\pi$.
The value of $\bar{d}(t)$ can be bounded    
by using the following standard coupling inequality:  
We have  
\begin{eqnarray}
||\mathbb{P}\bigl(V_{t}^{1,x}\in\cdot\bigr)-\mathbb{P}\bigl(V_{t}^{1,y}\in\cdot\bigr)||_{TV}&=&\sup_{A\in\mathcal{B}}|\mathbb{P}\bigl(V_t^{1,x}\in A\bigr)-\mathbb{P}\bigl(V_t^{1,y}\in A\bigr)|\nonumber\\
&\le&\sup_{A\in\mathcal{B}}\mathbb{P}\bigl(\{V_t^{1,x}\in A\}\cap \{V_t^{1,y}\notin A\}\bigr)\nonumber\\
&\le&\mathbb{P}\bigl(V_t^{1,x}\not=V_t^{1,y}\bigr)\nonumber\\
&=&\mathbb{P}\bigl(T^{x,y}>t\bigr),
\end{eqnarray}
where $T^{x,y}=\inf\{t\ge 0:V_t^{1,x}=V_t^{1,y}\}$. This yields
\begin{equation}\label{coupling_inequalityI}
\bar{d}(t)\le\sup_{x,y}\mathbb{P}\bigl(T^{x,y}>t\bigr). 
\end{equation}
The strength of the above coupling inequality depends of course heavily on 
the choice of the coupling. In the following we
will consider processes $V_t^{1,x}$ and $V_t^{1,y}$ 
that are based on the same sequences $(T_i)_{i\in\mathbb{N}}$ and $(S_i)_{i\in\mathbb{N}}$ 
of arrival times and service requirements. We immediately see that 
that $V_t^{1,x}\le V_t^{1,y}$ whenever $x<y$; 
hence $V_t^{1,x}$ is a stochastically ordered Markov process in the sense of
\cite{LuMeTw}. This has the advantage that the coupling time $T^{x,y}$ can be related to
certain hitting times as has been done 
for example by Lund and Tweedie \cite{LuTw, LuMeTw, RoTw, ScTw}. 
These papers have been written in the setting of an 
unbounded state space, where uniform ergodicity mostly fails to be true, and  
focus on improving bounds that had been previously obtained by the Lyapunov function approach.  
Moreover, while in \cite{LuTw, LuMeTw, RoTw, ScTw} 
the tails of the coupling time are bounded from above by the tails of the hitting times of 
the "minimal element" of the state space, in our setting  
a simultaneous consideration of hitting the minimal or the  
maximal element leads to the desired bounds.  

Let us introduce the first times when the process that starts in $x$  hits $0$ or $1$, respectively: 
\begin{equation}
U_{0}^{x}:=\inf\{t\ge0:V_t^{1,x}=0\},\,\,\,\,U_{1}^{x}:=\inf\{t\ge0:V_t^{1,x}=1\}.
\nonumber 
\end{equation}
The following Lemma turns out to be very useful.

\begin{lemma}\label{main}
\begin{eqnarray}
\mathbb{P}\bigl(T^{x,y}>t\bigr)&\le&\mathbb{P}\bigl(U_{0}^{1}\wedge U_{1}^{0}>t\bigr)\label{coupling_ineq}\\
&\le& \min(\mathbb{P}\bigl(U_{0}^{1}>t\bigr),\mathbb{P}\bigl( U_{1}^{0}>t\bigr))\label{coupling_ineq_II}.
\end{eqnarray}
\end{lemma}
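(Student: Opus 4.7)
The plan is to exploit the monotone coupling fixed immediately before the lemma: since all processes $V^{1,\cdot}$ share the same arrival times $(T_i)_{i\in\mathbb{N}}$ and service requirements $(S_i)_{i\in\mathbb{N}}$, the map $x\mapsto V_t^{1,x}$ is pathwise nondecreasing, so the sandwich $V_t^{1,0}\le V_t^{1,x}\le V_t^{1,1}$ holds for every $x\in[0,1]$ and every $t\ge 0$. Given this, the second inequality \eqref{coupling_ineq_II} is just the trivial bound $\mathbb{P}(A\cap B)\le \min(\mathbb{P}(A),\mathbb{P}(B))$ applied to $A=\{U_0^1>t\}$ and $B=\{U_1^0>t\}$, so the real content is \eqref{coupling_ineq}.

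To prove \eqref{coupling_ineq} I would establish the stronger pathwise bound $T^{x,y}\le U_0^1\wedge U_1^0$, which immediately yields the desired inclusion of events. There are two coalescence mechanisms, one at each boundary. At $s=U_0^1$ the top trajectory reaches $0$, and the sandwich together with nonnegativity forces $V_s^{1,x}=V_s^{1,y}=0$ for every starting point. At $s=U_1^0$ the bottom trajectory reaches $1$; such a hit can only occur at an arrival epoch $s=T_n$, and the identity $\min(S_n+V_{s-}^{1,0},1)=1$ forces $S_n+V_{s-}^{1,0}\ge 1$. Monotonicity at time $s-$ then gives $S_n+V_{s-}^{1,z}\ge 1$ for every $z\in[0,1]$, so $V_s^{1,z}=\min(S_n+V_{s-}^{1,z},1)=1$ and the processes coalesce at the upper boundary. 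In either case $V^{1,x}$ and $V^{1,y}$ agree at $\min(U_0^1,U_1^0)$, and since they continue to be driven by the same $(T_i,S_i)$ they remain equal thereafter, whence $T^{x,y}\le U_0^1\wedge U_1^0$ almost surely.

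The only mildly delicate step is coalescence at the upper boundary: unlike the lower case, where the sandwich plus $V\ge 0$ collapses all sample paths to $0$ automatically, a hit of $1$ by the bottom process only forces the \emph{bottom} process to the boundary, so one has to invoke the explicit truncation rule in \eqref{M/G/1} to conclude that all intermediate processes are pushed up to $1$ as well. Everything else—monotonicity of the coupling, the sandwich, and the preservation of equality after coalescence—is a routine consequence of the shared driving data.
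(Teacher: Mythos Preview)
Your proof is correct and follows the same route as the paper's: the monotone coupling gives the sandwich $V_t^{1,0}\le V_t^{1,x}\le V_t^{1,1}$, and coalescence at either boundary forces $T^{x,y}\le U_0^1\wedge U_1^0$, from which both inequalities follow. The paper's own argument is a one-liner: order preservation plus ``$V_t^{1,y}=0\Rightarrow V_t^{1,x}=0$'' and ``$V_t^{1,x}=1\Rightarrow V_t^{1,y}=1$'' for $x\le y$.

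One small simplification: your treatment of the upper boundary is more elaborate than needed. Since the truncation rule in \eqref{M/G/1} guarantees $V_t^{1,z}\le 1$ for all $t$ and all $z\in[0,1]$, the moment $V_s^{1,0}=1$ the sandwich $V_s^{1,0}\le V_s^{1,z}\le 1$ already pins every trajectory to $1$; there is no need to identify $s$ as an arrival epoch or to unpack the $\min$ in the update rule. This is exactly how the paper's terse proof handles it, symmetrically to the lower boundary.
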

\begin{proof}
Since the coupling preserves the order, i.e., 
\begin{equation}
V_t^{1,x}\le V_t^{1,y}\,\,\forall x\le y,
\nonumber 
\end{equation}
it follows that $V_t^{1,y}=0$ implies $V_t^{1,x}=0$ and $V_t^{1,x}=1$ implies $V_t^{1,y}=1$.
\end{proof}
How much is lost when working with \eqref{coupling_ineq} and \eqref{coupling_ineq_II} as upper bounds for the tails of 
$\mathbb{P}\bigl(T^{x,y}>t\bigr)$? In Example 1 below 
 an application of \eqref{coupling_ineq} results in the 
exact rate of convergence to equilibrium, while \eqref{coupling_ineq_II} 
yields rates that are far from being optimal.  

We start with establishing uniform ergodicity for $V_t^{1,x}$.  
\begin{prop}\label{first_cor}
For all $t\ge 1$,  
\begin{equation}
\label{un} 
\sup_{x\in[0,1]}||\mathbb{P}(V_t^{1,x}\in\cdot)-\pi||_{TV}\le(1-e^{-\lambda})^{t}. 
\end{equation}
\end{prop}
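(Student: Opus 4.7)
The plan is to apply the coupling inequality of Lemma \ref{main}. Combining \eqref{dt}, \eqref{coupling_inequalityI} and \eqref{coupling_ineq_II} gives
\[
\bar{d}(t)\leq \sup_{x,y}\mathbb{P}\bigl(T^{x,y}>t\bigr)\leq \mathbb{P}\bigl(U_0^1>t\bigr),
\]
so the task reduces to bounding the upper tail of the first hitting time of $0$ for the process started at the capacity level~$1$.

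The decisive structural input is the a priori bound $V_s^{1,x}\in[0,1]$ for every $s\geq 0$ and $x\in[0,1]$, built into Model~1 by the truncation at the capacity. Because the workload decreases at unit rate between arrivals, any arrival-free interval of length~$1$ necessarily forces the process down to~$0$. Applied to $[0,1]$ this observation gives at once
\[
\mathbb{P}\bigl(U_0^1>1\bigr)\leq \mathbb{P}\bigl(\text{at least one arrival in }[0,1]\bigr)=1-e^{-\lambda}.
\]

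The iteration is by the strong Markov property at integer times: on the event $\{U_0^1>k\}$ one still has $V_k^{1,1}\in(0,1]$, so the same unit-interval argument on $[k,k+1]$, together with the independence of Poisson increments on disjoint intervals, yields $\mathbb{P}(U_0^1>k+1\mid U_0^1>k)\leq 1-e^{-\lambda}$. Induction then gives $\mathbb{P}(U_0^1>n)\leq (1-e^{-\lambda})^n$ for every integer $n\geq 1$, and the submultiplicativity \eqref{subm} of $\bar{d}$ lifts this to all real $t\geq 1$.

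The main obstacle lies precisely in this last lifting step. A direct appeal to submultiplicativity with $\bar{d}(1)\leq 1-e^{-\lambda}$ delivers only $\bar{d}(t)\leq (1-e^{-\lambda})^{\lfloor t\rfloor}$, which is strictly weaker than the claimed $(1-e^{-\lambda})^t$ whenever $t\notin\mathbb{N}$, since $1-e^{-\lambda}<1$ and $\lfloor t\rfloor\leq t$. Closing this gap requires extracting an additional factor of decay from the short residual interval $[\lfloor t\rfloor,t]$, for example by a refined analysis of $\mathbb{P}(U_0^1>t)$ that conditions on the workload value at time $\lfloor t\rfloor$ and on the Poisson arrivals in $[\lfloor t\rfloor,t]$, and once more exploits the universal cap $V^{1,1}\leq 1$. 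This is the only technically delicate, though essentially routine, point of the proof.
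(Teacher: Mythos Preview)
Your approach coincides with the paper's: bound $\bar d(1)$ via the coupling inequality and Lemma~\ref{main} by noting that an arrival-free interval $[0,1]$ forces $V^{1,1}$ down to $0$, so $\bar d(1)\le \mathbb{P}(U_0^1>1)\le 1-e^{-\lambda}$, and then iterate. The only cosmetic difference is that you iterate on $\mathbb{P}(U_0^1>n)$ via the Markov property, whereas the paper iterates on $\bar d$ directly via submultiplicativity~\eqref{subm}; these are the same mechanism.

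The ``obstacle'' you flag for non-integer $t$ is genuine, but the paper does not address it. The paper simply writes $\bar d(t)\le \bar d(1)^t=(1-e^{-\lambda})^t$ as if this were an immediate consequence of~\eqref{subm}. Strictly speaking, submultiplicativity together with $\bar d(1)\le 1-e^{-\lambda}$ and the trivial bound $\bar d(s)\le 1$ for $0\le s<1$ gives only $\bar d(t)\le (1-e^{-\lambda})^{\lfloor t\rfloor}$, exactly as you say. So your caution here is more scrupulous than the paper itself; the refinement you sketch is not carried out in the published argument. For the purposes of the paper the discrepancy is harmless---the two bounds differ by at most the fixed factor $(1-e^{-\lambda})^{-1}$ and give the same exponential rate---which is presumably why the authors did not pause over it.
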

\begin{proof}
Using the above coupling we obtain by applying Lemma \ref{main} that
\begin{equation}
\mathbb{P}\bigl(T^{x,y}>1\bigr)\le
 \mathbb{P}\bigl(T_{1}<1\bigr)=1-\mathbb{P}\bigl(T_{1}\ge1\bigr)=1-\mathbb{P}\bigl(U_{0}^1\le1\bigr)=1-e^{-\lambda}.
\end{equation}
Hence,  \eqref{coupling_inequalityI} yields 
\begin{equation}
\bar{d}(1)\le 1-e^{-\lambda}.
\end{equation}
Thus, by \eqref{subm}, 
\begin{equation}\label{simple}
\bar{d}(t)\le\bar{d}(1)^{t}=(1-e^{-\lambda})^{t}
\end{equation}
Now the assertion follows from \eqref{barbound} and \eqref{simple}.
\end{proof}
The bound in \eqref{un} becomes poor for large $\lambda$. 
However, in this case the process reaches level 1 
quickly so that  
 one might expect that \eqref{coupling_ineq_II} 
can be used to show,  for fixed $G$, uniform ergodicity with respect to $\lambda$. 
The following result shows that an even stronger statement holds.

\begin{theorem}\label{t:second}
For every $\beta> 0$ and $p>0$ the process $V_t^{1,x}$ has the SGP 
uniformly on $G\in\mathcal{G}_{\beta,p}=\{G\in\mathcal{G}:G(\beta,1]\ge p\}$ 
and uniformly in $\lambda$, i.e., for every $G\in\mathcal{G}_{\beta,p}$ 
and $\lambda >0$ the corresponding spectral gap $\rho =  \rho (G,\lambda)$ 
satisfies   
\begin{equation}\label{rho}
\rho (G,\lambda) \le 1-e^{-\lambda_0}<1,
\end{equation} 
where $\lambda_0=\lambda_0(p,\beta)$ is the unique solution $\lambda \in (1/p\beta ,\infty)$ of 
\begin{equation}\label{unique}
1-e^{-\lambda}=(\lambda \,p)^{\frac{1}{\beta}}e^{1-\lambda\,p}. 
\end{equation}
 
\end{theorem}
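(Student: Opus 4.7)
\textbf{My plan} is to combine the coupling inequality of Lemma~\ref{main} with sub-multiplicativity \eqref{subm}, adapting the time horizon $T$ in the bound $\rho(G,\lambda) \le \bar{d}(T)^{1/T}$ according to the size of $\lambda$ relative to $\lambda_0$. In the regime $\lambda \le \lambda_0$ the bound of Proposition~\ref{first_cor} at $T = 1$ already suffices: $\bar{d}(1) \le 1 - e^{-\lambda} \le 1 - e^{-\lambda_0}$ by the monotonicity of $1-e^{-\lambda}$, so $\rho(G,\lambda) \le 1 - e^{-\lambda_0}$.

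For $\lambda > \lambda_0$ I would set $T := \lambda_0/\lambda \in (0,1)$. Since $V_t^{1,1}$ drains at unit rate, $U_0^1 \ge 1 > T$ holds deterministically, so Lemma~\ref{main} collapses to $\bar{d}(T) \le \mathbb{P}(U_1^0 > T)$. The heart of the argument is then the lower bound
\[
\mathbb{P}(U_1^0 \le T) \ge (\lambda p T)^{1/\beta} e^{1 - \lambda p T},
\]
which I would establish by exhibiting the favorable event that at least $\lceil 1/\beta \rceil$ ``large'' arrivals (those with service exceeding $\beta$; these form a thinned Poisson process of rate at least $\lambda p$) occur in a carefully tuned short sub-window of $[0,T]$, clustered tightly enough that the workload is driven up to capacity $1$ before appreciable draining. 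The precise functional form emerges from Poisson-tail estimates together with an appropriate factorial bound. For the choice $T = \lambda_0/\lambda$ one has $\lambda p T = \lambda_0 p$, so the defining equation \eqref{unique} turns the lower bound into exactly $1 - e^{-\lambda_0}$; hence $\bar{d}(T) \le e^{-\lambda_0}$, and sub-multiplicativity yields $\rho(G,\lambda) \le \bar{d}(T)^{1/T} = e^{-\lambda_0 \cdot \lambda/\lambda_0} = e^{-\lambda}$.

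To close the argument, note that $p > 0$ together with $G(\beta,1] \ge p$ forces $\beta < 1$ (otherwise $(\beta,1]=\emptyset$), hence $\lambda_0 > 1/(p\beta) > 1 > \ln 2$, which gives $e^{-\lambda_0} < 1 - e^{-\lambda_0}$. Combining the two regimes then yields $\rho(G,\lambda) \le 1 - e^{-\lambda_0}$ uniformly in $G \in \mathcal{G}_{\beta,p}$ and in $\lambda > 0$. Uniqueness of $\lambda_0$ in $(1/(p\beta),\infty)$ follows from the strict monotonicity on that interval of $(\lambda p)^{1/\beta} e^{1 - \lambda p}$ (its $\lambda$-derivative is $p(\lambda p)^{1/\beta - 1} e^{1 - \lambda p}(1/\beta - \lambda p)$, which is negative for $\lambda p > 1/\beta$) together with the observations that this function exceeds $1$ at $\lambda = 1/(p\beta)$ while $1 - e^{-\lambda}$ is strictly increasing from below $1$ to $1$. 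The main obstacle is precisely the lower bound $\mathbb{P}(U_1^0 \le T) \ge (\lambda p T)^{1/\beta} e^{1-\lambda p T}$: the clustering intuition is transparent, but extracting exactly this functional form demands a delicate choice of the favorable sub-event (number of big arrivals, window length, clustering geometry) and sharp Poisson-type estimates.
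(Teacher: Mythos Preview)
Your approach has a genuine gap at precisely the point you flag yourself: the lower bound
\[
\mathbb{P}\bigl(U_1^0\le T\bigr)\ \ge\ (\lambda p T)^{1/\beta}e^{1-\lambda p T}
\]
for a \emph{fixed, small} $T=\lambda_0/\lambda$. You describe a clustering heuristic but do not prove the inequality, and there is good reason to doubt it in this form. For $\lambda$ only slightly above $\lambda_0$ one has $T$ close to $1$, and then ``$\lceil 1/\beta\rceil$ big jumps in a tiny sub-window'' no longer guarantees $U_1^0\le T$: over a window of length close to $1$ the drain is non-negligible, and the $\hat V$-comparison actually requires on the order of $\lceil (1+T)/\beta\rceil$ big jumps, not $\lceil 1/\beta\rceil$. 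More fundamentally, the expression $(\lambda p)^{1/\beta}e^{1-\lambda p}$ is a large-deviation \emph{rate}, arising via Stirling from the Poisson lower tail as $t\to\infty$; there is no reason to expect it to bound a fixed-time probability without lower-order corrections, and your choice $T=\lambda_0/\lambda$ needs the inequality to hold \emph{exactly} (you substitute it into \eqref{unique}).

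The paper avoids this difficulty by working the other way round. Instead of lower-bounding $\mathbb{P}(U_1^0\le T)$ at a carefully tuned finite $T$, it upper-bounds $\mathbb{P}(U_1^0>t)$ for \emph{all} $t$ by comparing with the dominated process $\hat V$ that records only jumps of size $\ge\beta$, each truncated to exactly $\beta$: if at least $\lceil(1+t)/\beta\rceil$ such jumps occur in $[0,t]$ the process must have hit $1$, so $\mathbb{P}(U_1^0>t)$ is bounded by an explicit Poisson lower-tail sum. One then passes to the limit $t\to\infty$ and reads off
\[
\limsup_{t\to\infty}\mathbb{P}(U_1^0>t)^{1/t}\ \le\ (\lambda p)^{1/\beta}e^{1-\lambda p}\qquad(\lambda p>1),
\]
combines this with $\limsup_t\mathbb{P}(U_0^1>t)^{1/t}\le 1-e^{-\lambda}$ via Lemma~\ref{main}, and analyses where the two curves in $\lambda$ cross to obtain $\lambda_0$. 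This route never needs a finite-$T$ estimate of the right functional shape; the shape falls out of the $t\to\infty$ asymptotics, which is exactly the regime in which $\rho$ is defined.
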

\begin{proof}
As in Proposition 1 we can easily derive that for all $\lambda >0$ we have
\begin{equation}\label{not_optimal}
\mathbb{P}\bigl(U^1_0>t)\le \mathbb{P}\bigl(U^1_0>1)^t\le(1-e^{-\lambda})^t
\end{equation}
However, the right-hand side of \eqref{not_optimal} tends to zero as $\lambda\rightarrow\infty$ and hence does not yield
any uniformity.
Consider an arbitrary $G\in \mathcal{G}_{\beta,p}$ and define the process
$\hat{V}_t^{1,x}$ as $V_t^{1,x}$ with the difference that 
\begin{itemize}
\item
All jumps of size $<\beta$ are not recognized
\item
All jumps of size $\ge\beta$ are reduced to size $\beta$.
\end{itemize}
Observe that the arrival times of the jumps of size $\beta$ 
form a Poisson process with intensity $\lambda p$ and that, 
obviously, $\hat{V}_t^{1,x}\le V_t^{1,x}$ for all $t\in\mathbb{R}_{+}$. Now let $\hat{U}_1^x$ be defined as $U_1^x$ but 
referring to
$\hat{V}_t^{1,x}$ instead of $V_t^{1,x}$ in its definition. Then we have $\hat{U}_1^x>U_1^x$ and therefore 
\begin{eqnarray}\label{probably_optimal}
\mathbb{P}\bigl(U_{1}^x>t\bigr)&\le&\mathbb{P}\bigl(U_{1}^0>t\bigr)\le\mathbb{P}\bigl(\hat{U}_1^0>t\bigr)\nonumber\\
&\le& \mathbb{P}\bigl(\mbox{ less than }\lceil\frac{1+t}{\beta}\rceil
\mbox{ jumps of size at least }\beta \mbox{ occur up to time }t\bigr)\nonumber\\
&=&1- \sum_{i=\lceil\frac{1+t}{\beta}\rceil}^{\infty}e^{-\lambda p t}\frac{(\lambda p t)^{i}}{i!}=\sum_{i=0}^{{\lceil\frac{1+t}{\beta}\rceil}-1}e^{-\lambda p t}\frac{(\lambda p t)^{i}}{i!}. 
\end{eqnarray}
Now Lemma \ref{main} yields
\begin{equation}\label{to_be_optimized}
\mathbb{P}\bigl(T^{x,y}>t\bigr)\le 
\min\bigl[(1-e^{-\lambda})^t,\sum_{i=0}^{{\lceil\frac{1+t}{\beta}\rceil}-1}e^{-\lambda p t}\frac{(\lambda p t)^{i}}{i!}\bigr]
\end{equation}
and hence for $\rho=\rho (G,\lambda)$
\begin{eqnarray}\label{to_be_optimized_II}
\rho (G,\lambda )
&=&\limsup_{t\rightarrow\infty}d(t)^{\frac{1}{t}}
\le\limsup_{t\rightarrow\infty}\mathbb{P}\bigl(T^{x,y}>t\bigr)^{\frac{1}{t}}\nonumber\\
&\le& \min\bigg( 1-e^{-\lambda},\limsup_{t\rightarrow\infty}
\Big(\sum_{i=0}^{{\lceil\frac{1+t}{\beta}\rceil}-1}e^{-\lambda p t}\frac{(\lambda p t)^{i}}{i!}\Big)^{\frac{1}{t}}\bigg)
\nonumber\\
&=& \min \Big(1-e^{-\lambda},\mathbf{1}_{\{\lambda p \le 1\}}+\mathbf{1}_{\{\lambda p > 1\}}(\lambda \,p)^{\frac{1}{\beta}}
e^{1-\lambda\,p}\Big). 
\end{eqnarray}
Let us consider the 
right-hand side of \eqref{to_be_optimized_II}: While $\lambda \mapsto 1-e^{-\lambda}$, $\lambda \in (0,\infty)$,  
is strictly increasing from 0 to 1, the function 
$\lambda \mapsto 
\mathbf{1}_{\{\lambda p \le 1\}}+\mathbf{1}_{\{\lambda p > 1\}}(\lambda \,p)^{\frac{1}{\beta}}e^{1-\lambda\,p}$ 
equals $1$ for $\lambda p\le 1$, is strictly increasing for $1< \lambda\,p\le\frac{1}{\beta}$
to a value larger than 1 
and strictly decreasing to 0 for $\lambda\,p>\frac{1}{\beta}$. This implies that there exists a unique 
$\lambda_0 \in (1/p\beta ,\infty)$ for which
\eqref{unique} holds true, and this $\lambda_0$  satisfies \eqref{rho}.
  


\end{proof}


{\bf Remarks}.
1. Observe that \eqref{to_be_optimized_II} yields a lower bound 
for the spectral gap $1-\rho$ for every given triple $\lambda,p,\beta$. \\
2.  
Since $\lambda_0>\frac{1}{p\,\lambda}$, 
the above lower bound for the spectral gap converges to 0 for fixed $p>0$ and $\beta\rightarrow 0$. 
 Below we deal with the 
question 
whether geometric ergodicity holds uniformly on the set of {\it all} service time distributions.\\  
3. As another approach to compute an upper bound, one could try the following:
\begin{eqnarray}\label{not_easy_to_handle}
&&\mathbb{P}\bigl(U_{1}^{0}<t\bigr)\le\mathbb{P}\bigl(\sup_{s\le t}V_t^{1,0}<1\bigr)\nonumber\\
&\le&\sum_{i=0}^{\infty}\mathbb{P}\bigl(\sup_{s\le t}V_t^{1,0}<1|J_{t}=i\bigr)\mathbb{P}\bigl(J_{t}=i\bigr)\nonumber\\
&=&\sum_{i=0}^{\infty} 
 e^{-\lambda t} \lambda^{i}\int_{[0,1]^{i}}\mathbf{1}_{\{x_1<x_2<\ldots<x_i\le t\}}\nonumber\\
&&\mathbb{E}\bigl[[\,\,[\ldots[[S_1-(x_2-x_1)]^{+}+S_2-(x_3-x_2)]^{+}+\ldots]
\nonumber\\ && \hspace{5cm} 
+S_i-(t-x_i)]^{+}\bigr]dx_1\ldots dx_i.
\end{eqnarray}
However, the calculation of the integral in \eqref{not_easy_to_handle} seems to be difficult. 

\subsection{Some special cases}\label{examples}
Let us consider two examples in which Lemma \ref{main} can be used directly. 
The first example exhibits a surprising behavior.

{\bf Example 1.} Assume that the service time distribution $G$ has its support in 
$[1,\infty)$. Consequently, whenever a customer enters the system 
both processes $V^{1,x}_t$ and  $V^{1,y}_t$ merge immediately  
and then remain together forever. On the other hand, if no customer 
enters the system during the first unit of time, both processes  
arrive at state $0$ independently of the initial values $x$ and $y$. Consequently,
\begin{eqnarray}
\mathbb{P}\bigl(T^{x,y}>t\bigr)=\mathbb{P}\bigl(T^{x,y}>t,T_1\le t\bigr)+\mathbb{P}\bigl(T^{x,y}>t,T_1> t\bigr)
\le e^{-\lambda t}\mathbf{1}_{[0,1)}(t).
\end{eqnarray} 
In particular we have $\bar{d}(t)=0$ for $t\ge 1$ and hence $d(t)=0$ for $t\ge 1$ by \eqref{barbound}. 
The fast speed of convergence is quite surprising, since it means that the process is 
already in equilibrium after one unit of time regardless of its initial value. 
This result shows the power of the simple coupling inequality \eqref{coupling_inequalityI}.\\

What is the distribution $\pi$ of $V^{1,x}_1\,\,$? Since 
\[
\bar{G}(x)=
\left\{
\begin{array}{c c}
1,&\,\,x\in[0,1)\\
0,&x \ge 1
\end{array}
\right.
\]
a straightforward calculation shows that  
\begin{equation}\label{sf1}
\sum_{i=1}^{\infty}\lambda^{i}\bar{G}^{\ast i}(x)=\lambda x-\lambda +\lambda e^{\lambda x}
\end{equation}
and hence
\begin{equation}\label{sf2}
\sum_{i=1}^{\infty}\lambda^{i}\int_0^1 \bar{G}^{\ast i}(x)dx=e^{\lambda}-\frac{\lambda}{2}-1.
\end{equation}
Now insert \eqref{sf1} and \eqref{sf2} in \eqref{invariant_density1}. This yields
\begin{equation}
\bar{\pi}(x)=\frac{\lambda x -\lambda +\lambda e^{\lambda x}}{e^{\lambda}-\frac{\lambda}{2}}.
\end{equation}
Adding the atom at 0 it is readily seen that the distribution function $\pi (x)$ is given by 
\[
\pi(x)=\frac{e^{\lambda x}-\lambda x+\frac{\lambda}{2}x^2}{e^{\lambda}-\frac{\lambda}{2}}.
\]   

{\bf Example 2}. 
Assume that $p=\mathbb{P}\bigl(G\ge 1\bigr)>0$. Then we have
\begin{equation}\label{ef}
\sup_{x\in[0,1]}||\mathbb{P}(V_t^{1,x}\in\cdot)-\pi||_{TV}\le e^{-\lambda p t}.
\end{equation}
To see this, we use use the same coupling as before.
Whenever a jump of size larger than one occurs, both processes glue together regardless 
of their initial values. The arrival times of 
the jumps of size larger than one is a Poisson 
process with intensity $\lambda p$. Hence \eqref{ef} follows from 
\[
\mathbb{P}\bigl(T^{x,y}>t\bigr)\le\mathbb{P}\bigl(T_1>t\bigr)\le e^{-p\lambda t}.
\] 
On the other hand, we have that 
\begin{eqnarray}
\mathbb{P}\bigl(T^{x,y}>1\bigr)&\le& \mathbb{P}\bigl(\text{ for $t\in[0,1]$ 
the process has at least one jump of size $<1$}\bigr)\nonumber\\
&=&1-e^{-\lambda (1-p)}
\end{eqnarray}
and hence
\begin{equation}\label{ef_2}
\sup_{x\in[0,1]}||\mathbb{P}(V_t^{1,x}\in\cdot)-\pi||_{TV}\le (1-e^{-\lambda (1-p)})^t.
\end{equation}
Now \eqref{ef} and \eqref{ef_2} together yield the following lower bound for spectral gap: 
\[
1-\rho\ge \min(1-e^{-\lambda p },e^{-\lambda (1-p)}).
\]
It follows immediately that $e^{-\lambda_0 (1-p)}$ is a lower bound which is uniform in $\lambda$,
where $\lambda_0$ is the unique solution of $e^{-\lambda (1-p)}=1-e^{-\lambda p}$.

\begin{subsection}{The SGP does not hold uniformly}\label{general} 
Let $\mathcal G$ be the set of all distributions on $(0,\infty)$. For general service distribution $G
\in \mathcal{G}$, it is not easy to analyze 
the time when the processes $V_t^{1,x}$ and $V_t^{1,y}$ merge. 

We show now that there is no universal bound for the spectral gap valid for
all $\lambda$ and all $G$. 
We will see in the proof of this result 
that the spectral gap converges to zero when taking  
the point mass at $\epsilon$ as service distribution, choosing 
$\lambda=\lambda_{\epsilon}\rightarrow\infty$ in a balanced way and letting $\epsilon\rightarrow 0$. 

\begin{theorem}\label{second_prop} 
\begin{equation}
\inf_{G\in  \mathcal{G},\lambda>0}(1-\rho (G,\lambda))=0.
\end{equation} 
\end{theorem}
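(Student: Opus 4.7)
The plan is to follow the construction indicated in the paragraph preceding the statement: take $G_\epsilon = \delta_\epsilon$ (the Dirac mass at $\epsilon$) and $\lambda_\epsilon = 1/\epsilon$, so that $\lambda_\epsilon\mathbb{E}[S_1] = 1$ gives zero mean drift while the variance rate $\lambda_\epsilon\epsilon^2 = \epsilon$ tends to zero. Heuristically, in the diffusive rescaling $V^{1,x}_{s/\epsilon}$ the workload converges to a reflected Brownian motion on $[0,1]$, whose first nontrivial Neumann eigenfunction $\cos(\pi v)$ has eigenvalue $\pi^2/2$; so the spectral gap of $V_t^{1,x}$ itself should be of order $\pi^2\epsilon/2$.

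To make this rigorous I would use an approximate-eigenfunction calculation. The generator is $L_\epsilon\phi(v) = -\phi'(v) + \lambda_\epsilon[\phi((v+\epsilon)\wedge 1) - \phi(v)]$. Choosing $\phi(v) = \cos(\pi v)$, a Taylor expansion on $[0,1-\epsilon]$ (using $\lambda_\epsilon\epsilon = 1$ to cancel the first-order advection) gives
\[
L_\epsilon\phi(v) = \tfrac{\epsilon}{2}\phi''(v) + O(\epsilon^2) = -\tfrac{\pi^2\epsilon}{2}\phi(v) + O(\epsilon^2).
\]
On the boundary layer $(1-\epsilon,1]$, the truncation $\phi((v+\epsilon)\wedge 1) = \phi(1)$ produces only an $O(\epsilon)$ pointwise error; moreover, by the explicit density of Theorem \ref{invariant_density} combined with the diffusive scaling, $\pi_\epsilon$ puts mass $O(\epsilon)$ on that layer. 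Writing $L_\epsilon\phi = -\tfrac{\pi^2\epsilon}{2}\phi + R_\epsilon$, one therefore has $\|R_\epsilon\|_\infty = O(\epsilon)$ together with $\int R_\epsilon\,d\pi_\epsilon = O(\epsilon^2)$, whence $\int\phi\,d\pi_\epsilon = O(\epsilon)$ from the stationarity identity $\int L_\epsilon\phi\,d\pi_\epsilon = 0$.

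Feeding this into Duhamel's formula for $g(t) := \mathbb{E}_0[\phi(V_t^{1,0})] - \int\phi\,d\pi_\epsilon$ yields $g(t) = e^{-\pi^2\epsilon t/2}\phi(0) + \mathcal{E}_\epsilon(t)$ with $|\mathcal{E}_\epsilon(t)| = O(\epsilon\log(1/\epsilon))$ uniformly on the window $t \le \log(1/\epsilon)/\epsilon$. Since $\|\phi\|_\infty = 1$ and $|g(t)| \le 2\,d(0,t)$, we deduce $d(0,t) \ge \tfrac14 e^{-\pi^2\epsilon t/2}$ on this window. To upgrade this window-wise bound to a statement about the asymptotic rate $\rho(G_\epsilon,\lambda_\epsilon) = \lim_{t\to\infty} d(0,t)^{1/t}$, one uses that $\phi = \cos(\pi v)$ has bounded-below projection onto the true first eigenfunction $\phi_1^\epsilon$ of $L_\epsilon$ (since $\phi_1^\epsilon \to \cos(\pi v)$ in the diffusion limit), so the observed exponential rate $\pi^2\epsilon/2$ is actually the asymptotic decay rate of $g$; hence $\rho(G_\epsilon,\lambda_\epsilon) \ge e^{-\pi^2\epsilon/2 - o(\epsilon)}$ and $1 - \rho(G_\epsilon,\lambda_\epsilon) = O(\epsilon) \to 0$ as $\epsilon \downarrow 0$, exhibiting a family along which the spectral gap vanishes.

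The principal obstacle is the uniform control of the boundary-layer remainder $R_\epsilon$: its pointwise size is genuinely $O(\epsilon)$ rather than $O(\epsilon^2)$ on $(1-\epsilon,1]$, so propagating it through Duhamel uniformly in $t$ requires the density estimate $\pi_\epsilon((1-\epsilon,1]) = O(\epsilon)$ together with a matching transient bound on $\mathbb{P}_0(V_s^{1,0} > 1-\epsilon)$. A cleaner alternative, which avoids these delicate boundary estimates, is to replace $\cos(\pi v)$ by a smooth modification that is constant on $[1-\epsilon,1]$ and still satisfies $\phi'' + \pi^2\phi = O(\epsilon)$ everywhere; this turns the boundary-layer issue into a routine interior perturbation estimate and sidesteps the transient probability altogether.
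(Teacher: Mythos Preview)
Your strategy differs substantially from the paper's. The paper never touches the generator or any spectral quantity: it works purely probabilistically, embedding a mean-zero random walk at the random times $R_i$ (with the same choice $\tilde\lambda=1/\epsilon$ you use), and then applies Doob's maximal inequality to show that a copy started above $3/4$ stays above $1/2$ and a copy started below $1/4$ stays below $1/2$, both with probability tending to $1$ as $\epsilon\to 0$ for every fixed $t$. This gives $\bar d_\epsilon(t)\to 1$ directly, without ever computing or approximating an eigenvalue. Your test-function route is more ambitious in that it would pin down the leading constant $\pi^2\epsilon/2$, but it is also considerably more delicate to make rigorous.

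There is a genuine gap in your argument at the ``upgrade'' step. Passing from a lower bound on $d(0,t)$ on a finite window $t\le (\log(1/\epsilon))/\epsilon$ to a lower bound on the asymptotic rate $\rho(G_\epsilon,\lambda_\epsilon)=\limsup_{t\to\infty}d(0,t)^{1/t}$ requires information for arbitrarily large $t$; a window bound says nothing about the tail. You close this by asserting that $\cos(\pi v)$ has bounded-below projection onto the true first eigenfunction $\phi_1^\epsilon$ because ``$\phi_1^\epsilon\to\cos(\pi v)$ in the diffusion limit''. But $L_\epsilon$ is not self-adjoint, so there is no variational principle and no automatic eigenfunction expansion; and proving that the eigenpair $(\mu_1^\epsilon,\phi_1^\epsilon)$ exists and converges to $(\pi^2\epsilon/2,\cos(\pi\cdot))$ is precisely the statement that the spectral gap is of order $\epsilon$, which is what you are trying to establish. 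The argument is circular at this point.

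The Duhamel step is also not closed. With $\|R_\epsilon\|_\infty=O(\epsilon)$ and $a=\pi^2\epsilon/2$, the remainder $\int_0^t e^{-a(t-s)}|\tilde r(s)|\,ds$ is a priori $O(\epsilon)\cdot O(1/a)=O(1)$, not $o(1)$; you need the transient estimate $\mathbb P_0(V_s>1-\epsilon)=O(\epsilon)$ uniformly in $s$, which you do not prove. Your proposed fix of flattening $\phi$ on $[1-\epsilon,1]$ does not help: if $\phi$ is constant there then $L_\epsilon\phi=0$ on that layer, but $-\tfrac{\pi^2\epsilon}{2}\phi\approx \tfrac{\pi^2\epsilon}{2}$ since $\phi\approx -1$, so $R_\epsilon$ is still only $O(\epsilon)$ there, and the claim ``$\phi''+\pi^2\phi=O(\epsilon)$ everywhere'' is incompatible with $\phi$ being constant and close to $-1$ on an interval. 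The paper's martingale argument sidesteps all of this boundary bookkeeping.
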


\begin{proof}
Let $\epsilon>0$ and take $G=\delta_{\epsilon}$, the point mass at $\epsilon$. Then 
\begin{equation}
\mathbb{P}\bigl(T_1>\epsilon\bigr)=e^{-\lambda\epsilon}.
\nonumber 
\end{equation} 
Moreover, let
\begin{equation}
R_{0}^{(\epsilon,\lambda)}=0,\,R_1^{(\epsilon,\lambda)}=
\min\{\epsilon,T_1\},\,R_{i+1}^{(\epsilon,\lambda)}
=\min\{R_i^{(\epsilon,\lambda)}+\epsilon,\min \{T_j:T_j\ge R_i,j\in\mathbb{N}\}\},
\nonumber 
\end{equation}
where as before the $T_j$ denote the arrival times of the process. We have, for
$x\in(\epsilon,1-\epsilon)$, 
\begin{eqnarray}
\mathbb{P}_{x}\bigl(V_{R_1^{(\epsilon,\lambda)}}\in\cdot\bigr)&=&
\mathbb{P}_{x}\bigl(V_{R_1^{(\epsilon,\lambda)}}
\in\cdot \mid T_1\ge \epsilon\bigr)\mathbb{P}_{x}\bigl(
T_1\ge \epsilon\bigr)
\nonumber\\ 
&& \hspace{3cm} +\int_0^{\epsilon}\mathbb{P}_{x}\bigl(
V_{R_1^{(\epsilon,\lambda)}}\in\cdot \mid 
T_1= s\bigr)\mathbb{P}_{x}\bigl(T_1\in ds \bigr)\nonumber\\
&=&e^{-\lambda\epsilon}\delta_{x-\epsilon}+\int_0^{\epsilon}\delta_{x+\epsilon -s}(\cdot)\lambda e^{-\lambda s}ds.
\nonumber 
\end{eqnarray}
This implies that 
\begin{equation}
\mathbb{P}_{x}\bigl(V_{R_1^{(\epsilon,\lambda)}}
\in\cdot\bigr)\rightarrow \delta_{x-\epsilon},\,\,\,\lambda\rightarrow 0\mbox{ and }
\mathbb{P}_{x}\bigl(V_{R_1^{(\epsilon,\lambda)}}\in\cdot\bigr)\rightarrow \delta_{x+\epsilon},\,\,\,\lambda\rightarrow \infty,
\nonumber 
\end{equation}
where the convergence is with respect to the weak topology. In particular, 
\begin{equation}\label{expectation}
\mathbb{E}_{x}\bigl(V_{R_1^{(\epsilon,\lambda)}}\bigr)\rightarrow {x-\epsilon},\,\,\,\lambda\rightarrow 0 \mbox{ and }
\mathbb{E}_{x}\bigl(V_{R_1^{(\epsilon,\lambda)}}\bigr)\rightarrow x+\epsilon,\,\,\,\lambda\rightarrow \infty.
\end{equation}
Observe that $\mathbb{E}_{x}\bigl(V_{R_1^{(\epsilon,\lambda)}}\bigr)$ depends continuously on $\lambda$. Hence by 
the intermediate value theorem there exists a $\tilde{\lambda}$ such
that $\mathbb{E}_{x}\bigl(V_{R_1^{(\epsilon,\tilde{\lambda})}}\bigr)=x$.
We can write 
\begin{equation}\label{sum}
V_{R_n^{(\epsilon,\tilde{\lambda})}}=V_{R_1^{(\epsilon,\tilde{\lambda})}}
+\sum_{i=1}^{n-1}\bigl(V_{R_{i+1}^{(\epsilon,\tilde {\lambda})}}-V_{R_{i}^{(\epsilon,\tilde{\lambda})}}\bigr).
\end{equation}
Since the inter-arrival times are 
exponentially distributed, it follows that for fixed $n$ 
and sufficiently small $\epsilon\le\tilde{\epsilon}(n,x)$ the sum in 
\eqref{sum} is a sum of i.i.d. random variables with expectation zero. Here, $\tilde{\epsilon}(n,x)$ must be 
chosen such that the process started at $x$ cannot reach the boundary up to time $R_n$. 
Now let $\tilde{V}_{R_n^{(\epsilon,\tilde{\lambda})}}$ be the boundary-free 
version of $V_{R_n^{(\epsilon,\tilde{\lambda})}}$, i.e., 
let $\tilde{V}_{R_n^{(\epsilon,\tilde{\lambda})}}$ be 
defined analogously to $V_{R_n^{(\epsilon,\tilde{\lambda})}}$, where in the definition of 
$V^{1}$ we have to replace $\bar{S}_n$ by $S_n$. Moreover, let
\begin{equation}
M_n=M_n^{(\epsilon)}=\sum_{i=1}^{n-1}\bigl(\tilde{V}_{R_{i+1}^{(\epsilon,\tilde {\lambda})}}-\tilde{V}_{R_{i}^{(\epsilon,\tilde{\lambda})}}).
\end{equation}
Observe that $M_n$
is a martingale with respect to the filtration $\sigma(M_1,M_2,\ldots,M_n),n\in\Nset$.
Let $N_t:=\max\{i\in\mathbb{N}:R_i\le t\}$ and $h$ be a function such that $h(\epsilon)\rightarrow 0$ for 
$\epsilon\rightarrow 0$, but $h(\epsilon)/\epsilon^{\alpha}\rightarrow \infty$ 
for all $\alpha>0$. Then if $x$ satisfies $x\ge\frac{3}{4}+\epsilon$  and $0< \epsilon<\frac{1}{4}$ we obtain 
\begin{eqnarray}
\mathbb{P}_x\bigl(V_t^{(\epsilon,\tilde{\lambda})}<\frac{1}{2}\bigr)&=&\mathbb{P}_{x}\bigl(V_{R_1^{(\epsilon,\tilde{\lambda})}}+\sum_{i=1}^{N_t-1}\bigl(V_{R_{i+1}^{(\epsilon,\tilde{\lambda})}}-V_{R_i^{(\epsilon,\tilde{\lambda})}}\bigr)+V_t-V_{R_{N_t}}<\frac{1}{2}\bigr)\nonumber\\
&\le&\mathbb{P}_{x}\bigl(\sum_{i=1}^{N_t-1}\bigl(V_{R_{i+1}^{(\epsilon,\tilde{\lambda})}}-V_{R_i^{(\epsilon,\tilde{\lambda})}}\bigr)<\frac{1}{2}-x+\epsilon\bigr)\nonumber\\
&\le&\mathbb{P}_{x}\bigl(\sum_{i=1}^{N_t}\bigl(V_{R_{i+1}^{(\epsilon,\tilde{\lambda})}}-V_{R_i^{(\epsilon,\tilde{\lambda})}}\bigr)<-\frac{1}{4}\bigr)\nonumber\\
&\le&\mathbb{P}_{x}\bigl(\sum_{i=1}^{N_t}(V_{R_{i+1}^{(\epsilon,\tilde{\lambda})}}-V_{R_{i}^{(\epsilon,\tilde{\lambda})}})<-\frac{1}{4},N_t\le \frac{h(\epsilon)}{\epsilon^2}\bigr)+\mathbb{P}_{x}\bigl(N_t> \frac{h(\epsilon)}{\epsilon^2}\bigr)\nonumber\\
&\le&\mathbb{P}_{x}\bigl(\sum_{i=1}^{N_t\wedge \frac{h(\epsilon)}{\epsilon^2}}(V_{R_{i+1}^{(\epsilon,\tilde{\lambda})}}
-V_{R_{i}^{(\epsilon,\tilde{\lambda})}})<-\frac{1}{4}\bigr)+\mathbb{P}_{x}\bigl(N_t> \frac{h(\epsilon)}{\epsilon^2}\bigr)
\nonumber\\
&\le&\mathbb{P}_{x}\bigl(|\sum_{i=1}^{N_t\wedge \frac{h(\epsilon)}{\epsilon^2}}(V_{R_{i+1}^{(\epsilon,\tilde{\lambda})}}
-V_{R_{i}^{(\epsilon,\tilde{\lambda})}})|\ge\frac{1}{4}\bigr)+\mathbb{P}_{x}\bigl(N_t> \frac{h(\epsilon)}{\epsilon^2}\bigr). 
\nonumber
\end{eqnarray} Hence, 
\begin{eqnarray}
\mathbb{P}_x\bigl(V_t^{(\epsilon,\tilde{\lambda})} 
&\le&\mathbb{P}_{x}\bigl(\sup_{j\le\frac{h(\epsilon)}{\epsilon^2}}\|\sum_{i=1}^{j}(V_{R_{i+1}^{(\epsilon,\tilde{\lambda})}}-V_{R_{i}^{(\epsilon,\tilde{\lambda})}})\|\ge\frac{1}{4}\bigr)+\mathbb{P}_{x}\bigl(N_t> \frac{h(\epsilon)}{\epsilon^2}\bigr)\nonumber\\
&\le&\mathbb{P}_{x}\bigl(\sup_{j\le\frac{h(\epsilon)}{\epsilon^2}}\|\sum_{i=1}^{j}(V_{R_{i+1}^{(\epsilon,\tilde{\lambda})}}-V_{R_{i}^{(\epsilon,\tilde{\lambda})}})\|\ge 1-x\bigr)+\mathbb{P}_{x}\bigl(N_t> \frac{h(\epsilon)}{\epsilon^2}\bigr)\nonumber\\
&=&\mathbb{P}_{x}\bigl(\sup_{j\le\frac{h(\epsilon)}{\epsilon^2}}\|\sum_{i=1}^{j}(\tilde{V}_{R_{i+1}^{(\epsilon,\tilde{\lambda})}}-\tilde{V}_{R_{i}^{(\epsilon,\tilde{\lambda})}})\|\ge 1-x\bigr)+\mathbb{P}_{x}\bigl(N_t> \frac{h(\epsilon)}{\epsilon^2}\bigr)\nonumber\\
&\le& \frac{1}{(1-x)^2} \mathbb{E}\bigl(\|\sum_{i=1}^{\lfloor\frac{h(\epsilon)}{\epsilon^2}\rfloor}(\tilde{V}_{R_{i+1}^{(\epsilon,\tilde{\lambda})}}-\tilde{V}_{R_{i}^{(\epsilon,\tilde{\lambda})}})\|^2\bigr)+\mathbb{P}_{x}\bigl(N_t> \frac{h(\epsilon)}{\epsilon^2}\bigr)\label{doob}\\
&=& \frac{1}{(1-x)^2}\lfloor\frac{h(\epsilon)}{\epsilon^2}\rfloor \mathbb{E}\bigl((\tilde{V}_{R_{2}^{(\epsilon,\tilde{\lambda})}}-\tilde{V}_{R_{1}^{(\epsilon,\tilde{\lambda})}})^2\bigr)+\mathbb{P}_{x}\bigl(N_t> \frac{h(\epsilon)}{\epsilon^2}\bigr)\nonumber\\
&\le&\frac{1}{(1-x)^2}h(\epsilon)+\mathbb{P}_{x}\bigl(N_t> \frac{h(\epsilon)}{\epsilon^2}\bigr).\label{last}
\end{eqnarray}
In \eqref{doob} we have used Doob's maximal inequality for martingales. Next note that  
\begin{eqnarray}
\mathbb{P}_{x}\bigl(N_t> \frac{h(\epsilon)}{\epsilon^2}\bigr)&\le&\mathbb{P}\bigl(R_1+\sum_{i=1}^{\lfloor\frac{h(\epsilon)}{\epsilon^2}\rfloor-1}(R_{i+1}-R_{i})<t\bigr)\nonumber\\
&\le&\mathbb{P}\bigl(\sum_{i=1}^{\lfloor\frac{h(\epsilon)}{\epsilon^2}\rfloor-1}(R_{i+1}-R_{i})<t+\epsilon\bigr)\nonumber\\
&\le&\mathbb{P}\bigl(\frac{1}{\sigma_{\epsilon}\sqrt{\lfloor\frac{h(\epsilon)}{\epsilon^2}\rfloor}}
\sum_{i=1}^{\lfloor\frac{h(\epsilon)}{\epsilon^2}\rfloor-1}\bigl((R_{i+1}-R_{i})
-\mathbb{E}\bigl(R_{2}-R_{1}\bigr)\bigr)\label{minus_infinity}
\\ && \hspace{4cm} 
<\frac{t+\epsilon-\lfloor\frac{h(\epsilon)}{\epsilon^2}\rfloor\mathbb{E}
\bigl(R_{2}-R_{1}\bigr)}{\sigma_{\epsilon}\sqrt{\lfloor\frac{h(\epsilon)}{\epsilon^2}\rfloor}}\bigr), \nonumber  
\end{eqnarray}
where $\sigma_{\epsilon}^2=\mbox{Var}(R_2-R_1)$.
From the standard central limit theorem it follows that 
\[
\frac{1}{\sigma_{\epsilon}\sqrt{\frac{h(\epsilon)}{\epsilon^2}}}\sum_{i=1}^{\lfloor\frac{h(\epsilon)}{\epsilon^2}\rfloor-1}\bigl((R_{i+1}-R_{i})-\mathbb{E}\bigl(R_{2}-R_{1}\bigr)\bigr)\rightarrow N(0,1)
\]
in distribution. 
On the other hand, the right-hand side in \eqref{minus_infinity} converges to $-\infty$, and hence we have
\begin{equation}
\mathbb{P}_{x}\bigl(N_t> \frac{h(\epsilon)}{\epsilon^2}\bigr)\rightarrow 0\mbox{ for }\epsilon\rightarrow 0.
\end{equation}
This together with \eqref{last} implies that
\begin{equation}\label{right_concentration}
\mathbb{P}_{x}\bigl(V_t^{(\epsilon,\tilde{\lambda})}<\frac{1}{2}\bigr)\rightarrow 0\mbox{ for }\epsilon\rightarrow 0.
\end{equation}
Now we can carry out a similar calculation for $y\le \frac{1}{4}-\epsilon$ ($0<\epsilon<\frac{1}{4}$), yielding 
\begin{equation}\label{left_concentration}
\mathbb{P}_{y}\bigl(V_t^{(\epsilon,\tilde{\lambda})}>\frac{1}{2}\bigr)\rightarrow 0\mbox{ for }\epsilon\rightarrow 0.
\end{equation}
Let $\bar{d}_{\epsilon}(t)=\sup_{x,y}\|\mathbb{P}_{x}\bigl(V_t^{(\epsilon,\tilde{\lambda})}
\in\cdot\bigr)-\mathbb{P}_{y}\bigl(V_t^{(\epsilon,\tilde{\lambda})}\in\cdot\bigr)\|_{TV}$. Then 
it follows from \eqref{right_concentration} and \eqref{left_concentration} 
\[\bar{d_{\epsilon}}(t)\rightarrow 1,\,\,\,\epsilon \rightarrow 0\mbox{ for all }t>0,\]
from which the result follows.
\end{proof}
\end{subsection}

\begin{section}{Results for Model 2} 
In this section we present the basic analysis of Model 2.
It is shown that $\mathbb{E}(S_1)<\infty$ implies that the
process $V_t^{2,\cdot}$ has an invariant distribution $\pi$ 
and determine an explicit formula for $\pi$. A condition ensuring geometric ergodicity is given and an 
estimate for the rate of convergence in the case of bounded jumps is derived.

\begin{subsection}{The invariant distribution} 
\begin{theorem}
The process $V^{2,x}_t$ has an invariant distribution if $\mathbb{E}\bigl(S_1\bigr)<\infty$.
In this case the invariant density $\tilde{\pi}$ on $(0,\infty)$ is given by  
\begin{equation}
\tilde{\pi}(x)=\left\{
\begin{array}{c c}
\dfrac{\sum_{i=1}^{\infty}\lambda^{i}\bar{G}^{\ast i}(x)}{1+\lambda\mathbb{E}
\bigl(S_1\bigr)\bigl(1+\sum_{i=1}^{\infty}\lambda^{i}\int_{0}^{b}\bar{G}^{\ast i}(y)dy\bigr)},&x\in(0,1]\\
\dfrac{\lambda\bar{G}(x)+\lambda\sum_{i=1}^{\infty}\lambda^{i}\int_{0}^{b}\bar{G}(x-y)
\bar{G}^{\ast i}(y)  dy}{1+\lambda\mathbb{E}\bigl(S_1\bigr)\bigl(1+\sum_{i=1}^{\infty}\lambda^{i}\int_{0}^{b}
\bar{G}^{\ast i}(y)dy\bigr)}, &x\in (1,\infty)
\end{array}\right.
\end{equation}
\end{theorem}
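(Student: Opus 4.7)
The plan is to mimic the level-crossing derivation leading to Theorem 1, carefully accounting for the fact that for Model 2 the state space is all of $[0,\infty)$ and that arrivals are admitted only when the workload lies below the threshold $b=1$.

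First, I would dispose of existence of $\pi$. The point $\{0\}$ is a regenerative point for $V^{2,x}_t$: after each hit at $0$ the process starts afresh, so by the standard regenerative theory (as invoked for Model 1 above) it is enough to show that the expected return time to $0$ is finite. Since admission is shut off whenever $V^{2}\ge 1$, the workload is always dominated, in an obvious coupling sense, by the workload of the standard M/G/1 queue with admitted arrival rate $\lambda \mathbb{P}(V^2<1)\le \lambda$; combined with $\mathbb{E}(S_1)<\infty$, this yields a finite-mean busy cycle and hence geometric ergodicity with uniquely determined $\pi$.

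Next, applying the level-crossing (Brill) identity as in \eqref{flow_out}, for every $x>0$ the downcrossing rate equals $\tilde\pi(x)$, while upcrossings can only originate at levels $y<1$. Setting the two rates equal gives
\begin{equation}
\tilde\pi(x)=\lambda\pi(0)\bar G(x)+\lambda\int_{0}^{\min(x,1)}\bar G(x-y)\tilde\pi(y)\,dy.
\end{equation}
For $x\in(0,1]$ this is exactly the equation (\ref{flow_out}) for Model 1; iterating and invoking nonnegativity as in \eqref{tildepi}--\eqref{tildepi1} yields the clean representation
\begin{equation}
\tilde\pi(x)=\pi(0)\sum_{i=1}^{\infty}\lambda^{i}\bar G^{\ast i}(x),\qquad x\in(0,1].
\end{equation}
For $x>1$ the upper limit of integration is frozen at $1$, so I simply plug the formula just obtained back into the integral equation to recover the second branch of the claimed expression for $\tilde\pi$.

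It remains to identify $\pi(0)$, and here I would avoid the messy direct integration of the second branch and instead use a global conservation-of-work argument. In equilibrium, the mean rate at which work is added equals the mean rate at which it drains: the inflow rate is $\lambda\mathbb{E}(S_1)\mathbb{P}(V^{2}<1)=\lambda\mathbb{E}(S_1)\bigl(\pi(0)+\int_{0}^{1}\tilde\pi(y)\,dy\bigr)$, while the outflow rate is $1-\pi(0)$ since the process drains at unit rate whenever $V^2>0$. Substituting the explicit form of $\tilde\pi$ on $(0,1]$ into $\int_0^1\tilde\pi(y)\,dy$ and solving the resulting linear equation for $\pi(0)$ produces precisely the denominator $1+\lambda\mathbb{E}(S_1)\bigl(1+\sum_{i\ge 1}\lambda^{i}\int_{0}^{b}\bar G^{\ast i}(y)\,dy\bigr)$ appearing in the theorem.

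The main technical hurdle is the usual one hidden in the iteration: one must verify that $\lambda^{n}(\bar G^{\ast n}\ast\tilde\pi)(x)\to 0$ pointwise in order to pass from the Neumann series to the closed-form representation on $(0,1]$, and that the resulting series $\sum_{i}\lambda^i\bar G^{\ast i}(x)$ is finite for a.e.\ $x\in(0,1]$. Both facts follow, as in the Model 1 argument, from a priori finiteness of $\tilde\pi$ (guaranteed by existence of the invariant density). Everything else is bookkeeping, in particular checking that the second branch produces an $L^1$ function on $(1,\infty)$, which is automatic once the flow balance identity has forced the normalization to be consistent.
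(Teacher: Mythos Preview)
Your derivation of the density formula is essentially the paper's: level-crossing gives $\tilde\pi(x)=\lambda\pi(0)\bar G(x)+\lambda\int_0^{\min(x,1)}\bar G(x-y)\tilde\pi(y)\,dy$, iteration on $(0,1]$ yields the Neumann series, substitution back gives the branch on $(1,\infty)$, and your ``conservation of work'' identity $1-\pi(0)=\lambda\,\mathbb{E}(S_1)\,\pi[0,1]$ is exactly what the paper obtains by integrating the level-crossing equation over $(0,\infty)$ via Fubini. The one slip is your existence paragraph: dominating $V^{2}$ by an M/G/1 with ``admitted arrival rate $\lambda\,\mathbb{P}(V^2<1)$'' is circular (it presupposes $\pi$), and domination by the full-rate-$\lambda$ M/G/1 does not give a finite-mean busy cycle unless $\lambda\,\mathbb{E}(S_1)<1$; the paper instead uses level $1$ as the regeneration point and observes that the expected inter-visit time is finite because the overshoot above $1$ has mean at most $\mathbb{E}(S_1)$.
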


\begin{proof} 
The condition $\mathbb{E}\bigl(S_1\bigr)<\infty$ ensures that the expected time between two consecutive 
visits of $V^{2,x}_t$ at level 1 is finite so that the limit theorem for regenerative processes can be applied. 
Setting the invariant density $\tilde{\pi}(x)$ equal to the upcrossing rate of level $x\in(0,1]$ we get 
\begin{eqnarray}
\tilde{\pi}(x)&=&\lambda\int_0^x \bar{G}(x-y)\pi(dy)=
\lambda\pi(0)\bar{G}(x)+\lambda\int_{0}^x\bar{G}(x-y)\tilde{\pi}(y)dy\nonumber\\
&=&\lambda\pi(0)\bar{G}(x)+\lambda\bar{G}\ast\tilde{\pi}(x)
\end{eqnarray}
As in the proof of Theorem 1 this yields 
for $x\in(0,1]$
\begin{equation}\label{eq1}
\tilde{\pi}(x)
=\lambda\pi(0)\bar{G}(x)+\lambda\bar{G}\ast\tilde{\pi}(x)=\pi(0)\sum_{i=1}^{\infty}\lambda^{i}\bar{G}^{\ast i}(x).
\end{equation}
For $x\in(1,\infty)$ the same arguments as above show that 
\begin{equation}\label{eq2}
\tilde{\pi}(x)=\lambda\pi(0)\bar{G}(x)+\lambda\int_{0}^{1}\bar{G}(x-y)\tilde{\pi}(y)dy.
\end{equation}
If we define $\tilde{\tilde{\pi}}(x)=\tilde{\pi}(x)\mathbf{1}_{(0,1]}(x)$, we obtain from \eqref{eq1} and
\eqref{eq2} that for all $x\in(0,\infty)$ we have
\begin{equation}\label{eq3}
\tilde{\pi}(x)=\lambda\pi(0)\bar{G}(x)+\lambda 
 \int_{0}^{x}\bar{G}(x-y)\tilde{\tilde{\pi}}(y)dy=\lambda\pi(0)\bar{G}(x)+\lambda (\bar{G}\ast\tilde{\tilde{\pi}})(x).
\end{equation}
Taking the integral in \eqref{eq3}, an application of Fubini's theorem and \eqref{eq1} leads to 
\begin{eqnarray}
1-\pi(0)&=&\lambda\pi(0)\mathbb{E}(S_1)+\lambda \mathbb{E}(S_1)\pi(1)=\lambda\pi(1)\mathbb{E}(S_1)\\
&=&\pi(0)\lambda\mathbb{E}\bigl(S_1\bigr)\bigl(1+\sum_{i=1}^{\infty}\lambda^{i}\int_{0}^{b}\bar{G}^{\ast i}(y)dy\bigr),
\end{eqnarray}
which yields
\begin{equation}\label{pi0}
\pi(0)=\frac{1}{1+\lambda\mathbb{E}\bigl(S_1\bigr)\bigl(1+\sum_{i=1}^{\infty}\lambda^{i}\int_{0}^{1}\bar{G}^{\ast i}(y)dy\bigr)}.
\end{equation}
The claim follows now from \eqref{pi0}, \eqref{eq1} and \eqref{eq2}.
\end{proof}



\end{subsection}

\begin{subsection}{A sufficient condition for geometric ergodicity}
For jump distributions with unbounded support $V^{2,x}_t$  is in general not geometrically ergodic. 
The next theorem gives a sufficient condition.  
\begin{theorem}
The process $V_t^{2,x}$ is geometrically ergodic if  
\begin{equation}
\mathbb{E}\bigl(r^{S_1}\bigr)< \infty \ \mbox{ for some } r>1.
\end{equation}
\end{theorem}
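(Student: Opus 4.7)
The plan is to establish $W$-geometric ergodicity by verifying a Foster--Lyapunov geometric drift inequality with respect to a petite set and then invoking the continuous-time Meyn--Tweedie criterion.

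Pick $c\in(0,\log r]$ so that $M(c):=\mathbb{E}(e^{cS_1})<\infty$, and set $W(x):=e^{cx}$ on $[0,\infty)$. The extended generator $\mathcal{A}$ of the piecewise deterministic Markov process $V^{2,\cdot}_t$ acts on $W$ as
\begin{equation*}
\mathcal{A}W(x)=-cW(x)\mathbf{1}_{\{x>0\}}+\lambda\mathbf{1}_{\{x<1\}}W(x)\bigl[M(c)-1\bigr],
\end{equation*}
the first term reflecting the deterministic drainage at unit rate while $V>0$, the second the admitted jumps, which occur only when $V<1$. For $x\ge 1$ this reduces to $-cW(x)$, while for $0\le x<1$ the bound $W(x)\le e^c$ yields $\mathcal{A}W(x)\le -cW(x)+\lambda e^c(M(c)-1)$. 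Absorbing the residual term at $x=0$ into the constant $b:=c+\lambda e^c(M(c)-1)$, we obtain the global geometric drift inequality
\begin{equation*}
\mathcal{A}W(x)\le -cW(x)+b\mathbf{1}_{[0,1]}(x),\qquad x\in[0,\infty).
\end{equation*}

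Next I would verify that $C:=[0,1]$ is a small set. For any $x\in C$, if no Poisson arrival occurs in the time interval $[0,1]$ (an event of probability $e^{-\lambda}$ independent of $x$), then the workload drains completely and $V^{2,x}_1=0$. Hence
\begin{equation*}
\mathbb{P}\bigl(V^{2,x}_1\in\cdot\bigr)\ge e^{-\lambda}\delta_0(\cdot),\qquad x\in C,
\end{equation*}
which is the required Doeblin minorization at the deterministic time $t=1$. Combining the drift inequality with this small-set minorization and applying the continuous-time Meyn--Tweedie theorem (see e.g.\ \cite{MeTw}) yields $W$-geometric ergodicity of $V^{2,\cdot}_t$; since $W\ge 1$, geometric ergodicity in total variation distance follows.

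I expect the main technical obstacle to be justifying the generator computation rigorously---verifying that $W$ lies in the extended domain of $\mathcal{A}$ and that the mild regularity conditions (non-explosivity, Borel right process structure, appropriate irreducibility) needed by the continuous-time Meyn--Tweedie theorem are met. These are routine for this piecewise deterministic Markov process with bounded jump intensity $\lambda$ and exponentially integrable jump sizes, but they do require some careful bookkeeping. An alternative route, more in the spirit of the coupling arguments elsewhere in the paper, would be to couple $V^{2,x}$ and $V^{2,y}$ using common arrival and service sequences and reduce the tail of the coupling time to the tail of the return time to $0$; proving that the latter has a finite exponential moment would then hinge on exactly the same exponential-moment hypothesis $M(c)<\infty$.
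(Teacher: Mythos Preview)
Your proof is correct and complete. It differs, however, from the route the paper actually takes.

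The paper invokes a different (but equivalent) Meyn--Tweedie criterion: Theorem~15.0.1 of \cite{MeTw}, which gives geometric ergodicity once one exhibits a petite set $C$ and $\kappa>1$ with $\sup_{x\in C}\mathbb{E}_x(\kappa^{\tau_C})<\infty$. The paper simply takes $C=\{0\}$ and asserts that the claim follows; implicitly one must check that the first return time to $0$ has a finite exponential moment, which rests on the hypothesis $\mathbb{E}(r^{S_1})<\infty$ via the structure of the busy period (each admitted jump adds $S_i$ to the remaining busy time, and the workload is below $1$ whenever a jump is admitted).

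Your Foster--Lyapunov approach with $W(x)=e^{cx}$ is more explicit: the exponential-moment hypothesis $M(c)<\infty$ enters directly in the generator computation, the drift inequality $\mathcal{A}W\le -cW+b\mathbf{1}_{[0,1]}$ comes out with concrete constants, and the minorization on $[0,1]$ is verified cleanly. In particular your argument yields an identifiable rate $c$ (any $c\le \log r$) rather than a bare existence statement. The two approaches are of course linked---the drift condition is precisely what guarantees the exponential return-time bound the paper appeals to---but your route requires no separate analysis of the busy-period length. Your closing remark about the coupling alternative is exactly the bridge: bounding the tail of the return time to $0$ would recover the paper's line of argument.
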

\begin{proof}
Let 
\begin{equation}
\tau_{C}^x=\inf_{t}\{t>0 :V_t^{2,\cdot}\in C\}.
\end{equation}
The proof is based on Theorem 15.0.1 in \cite{MeTw} which, translated to our setting, essentially states the following: 
If there exists a petite set $C\in\mathcal{B}(\mathbb{R}_+)$ 
(for a definition of the term `petite' we refer to \cite{MeTw}) and $r>1$ such
that 
\begin{equation}
\sup_{x\in C}\mathbb{E}_x\bigl(r^{\tau_C^x}\bigr)<\infty,
\end{equation}
then $V_t^{2,x}$ is geometrically ergodic.
Now we can choose $C=\{0\}$ and the claim follows.
\end{proof}

\end{subsection}

\begin{subsection}{Jump distributions with compact support}
In this subsection we assume that $G$ has compact support. Let $b$ be minimal such that
\begin{equation}
\mbox{supp}(G)\subset[0,b].
\end{equation}
By definition \eqref{defiV2} of the process $V^{2,x}_t$ it follows that
\begin{equation}
V^{2,x}_t\subset [0,b+1].
\end{equation}
 In order to estimate $d(t)$, let us bound $\bar{d}(t)$ for this example by using once 
again \eqref{coupling_inequalityI}, where $T^{x,y}$ is defined here is 
as before in the sense that in the former definition of $T^{x,y}$ one simply has to replace $V_t^{1,x}$ by $V_t^{2,x}$ and $V_t^{1,y}$ by $V_t^{2,y}$.\\ 

Let $0=x_0<x_1,\ldots<x_{N(\epsilon)-1}=b+1$ be a decomposition of the interval 
$[0,b+1]$ such that $x_{i+1}-x_{i}\le\epsilon$ for $i\in\{0,\ldots,N(\epsilon)-1\}$
\begin{eqnarray}
\bar{d}(t)&=&\sup_{x,y\in[0,b+1]}||\mathbb{P}\bigl(V_t^{1,x}\in \cdot\bigr)-\mathbb{P}\bigl(V_t^{1,y}\in \cdot\bigr)\|_{TV}\nonumber\\
&\le&\sum_{i=0}^{N(\epsilon)-1}\sup_{x,y\in[x_i,x_{i+1})}||\mathbb{P}\bigl(V_t^{1,x}\in \cdot\bigr)-\mathbb{P}\bigl(V_t^{1,y}\in \cdot\bigr)\|_{TV}\nonumber\\
&\le& \frac{b+1}{\epsilon}\bigl(1-e^{-\lambda(1+\epsilon)}\bigr)^{\lfloor\frac{t}{b+1}\rfloor}.
\end{eqnarray}
This implies
\begin{equation}
\limsup_{t\rightarrow\infty}-\frac{1}{t}\log\bar{d}(t)\ge -\frac{1}{b+1}\log\bigl(1-e^{-\lambda(1+\epsilon)}\bigr)\,\,\,\,\forall\epsilon>0,
\end{equation}
which immediately yields that
\begin{equation}\label{deterministic}
\limsup_{t\rightarrow\infty}-\frac{1}{t}\log d(t)\ge -\frac{1}{b+1}\log\bigl(1-e^{-\lambda}\bigr).
\end{equation} 
Therefore, 
\begin{equation*}
\lim_{t\rightarrow\infty} e^{\alpha t} d(t)=0 
\end{equation*} 
for every $\alpha   < \frac{1}{b+1}|\log\bigl(1-e^{-\lambda}\bigr)|$.

\end{subsection}

\end{section}




\begin{thebibliography}{9}
\bibitem{As}  
\textsc{Asmussen, S:} \textsc{Applied Probability and Queues}. 2nd ed., Springer (2003).

\bibitem{BPSZ} \textsc{O. Boxma, D. Perry, W. Stadje and S. Zacks:} 
The $M/G/1$ queue with quasi-restricted accessibility. 
 {\it Stoch. Models} 25, 151-196 (2009). 

\bibitem{brill} \textsc{Brill, P. H.:} \textit{Level Crossing Methods in Stochastic Models}. Springer (2008). 


\bibitem{Ca2}
\textsc{Callaert, H.:}
On the rate of convergence in birth-and-death processes.
\textit{Bull. Soc. Math. Belg.} 26, 173–184 (1974).

\bibitem{Ch}
\textsc{Chen, M. F.:}
Exponential $l^2$ convergence and $l^2$ spectral gap for Markov processes.
\textit{Acta Math. Sin.} 7, 19–37 (1991).



\bibitem{C1} \textsc{Cohen, J. W.:} Single server queue with restricted
accessibility. \textit{J.  Engineering Math.} {3}, 265-285 (1969).  



\bibitem{Co} 
\textsc{Cohen, J. W.:} 
{\it The Single Server Queue.} 2nd ed. North Holland (2003).

\bibitem{Da} 
\textsc{Daley, D. J.:} 
Single server queuing Systems with uniformly limited queuing times. 
\textit{J. Austral. Math. Soc.} 4, 347-358 (1964).

\bibitem{GaGo}
\textsc{Garmarnik, D., Goldberg, D.:}
On the rate of convergence to stationarity of the $M/M/N$ Queue in the 
Halfin-Whitt regime.
\textit{arXiv:1003.2004}.

	
\bibitem{GK} \textsc{Gnedenko, B. V. and Kovalenko, I. N.}:  \textit{Introduction
to
Queueing Theory}. 2nd ed., Birkh\"auser (1989).
 
\bibitem{GS} \textsc{B. Gavish, B. and Schweitzer, P. J.}: The Markovian queue with
bounded
waiting time. \textit{Management Science} {23}, 1349-1357 (1997).
 
\bibitem{Ho} \textsc{Hokstad, P.:} A single server queue with constant 
service time and restricted accessibility. \textit{Management Science} 
 {25}, 205-208 (1979).  
\bibitem{KaMc1}
\textsc{Karlin, S., McGregor, J.L.:} 
The  differential equations of birth-and-death processes, and the Stieltjes moment problem.
\textit{Trans. Amer. Math. Soc.} 85, 589-646 (1957).

\bibitem{KaMc2}
\textsc{Karlin, S., McGregor, J. L.:}
Many-server queueing processes with Poisson input and exponential service times. {\it Pacific J. Math.} 8, 87–118 (1958).



\bibitem{LePeWi}
\textsc{Levin, D. A., Peres, Y., Wilmer, E. L.:} 
{\sl Markov Chains and Mixing Times}. American Mathematical Society (2008).






\bibitem{LT} \textsc{Loris-Teghem, J.:} On the waiting time distribution in a 
generalized queueing system with uniformly bounded sojourn time. 
\textit{J. Appl. Prob.} {9}, 642-649 (1972).


\bibitem{LuMeTw}
\textsc{Lund, R.B., Meyn, S.P., Tweedie, R.L.:}
Computable exponential convergence rates for stochastically ordered Markov processes. \textit{Ann. Appl. Probab.} 6,
218-237 (1996).


\bibitem{LuTw}
\textsc{Lund, R. B., Tweedie, R. L.:}
Geometric convergence rates for stochastically ordered Markov chains.
\textit{Math. Oper. Res.}, 21, 182-194 (1996).



\bibitem{MeTw}
\textsc{Meyn, S.P., Tweedie, R.L.:}
{\sl Markov Chains and Stochastic Stability}. Springer (1993).


\bibitem{Ma}
\textsc{Mao, Y. H.:}
Strong ergodicity for Markov processes by coupling methods.
\textit{J. Appl. Probab.} 39, 839-852 (2002).


\bibitem{Nu} 
\textsc{Nummelin, E.:} {\sl General Irreducible Markov Chains and Non-Negative Operators}. Cambridge Univ. Press (1984).


\bibitem{PeStaZa} 
\textsc{Perry, D., Stadje, W., Zacks, S.:} A duality approach to queues with service restrictions and storage systems
with state-dependent rates. \textit{Preprint} (2010).

\bibitem{RoTw}
\textsc{Roberts, G. O., Tweedie, R. L.:}
Rates of convergence of stochastically monotone and continuous time Markov models. 
\textit{J. Appl. Probab.} 37, 359-373 (2000). 
 
\bibitem{ScTw}
\textsc{Scott, D. J., Tweedie, R. L.:}
Explicit rates of convergence of stochastically monotone Markov chains.
In
\textit{Proceedings of the Athens Conference on Applied Probability and Time Series: 
Papers in Honour of J. M. Gani and E. J. Hannan} (C. C. Heyde, Yu. V. Prohorov, R. Pyke and S.T. Rachev, eds.), 176-191 
(1996). 

\bibitem{StPa}
\textsc{Stadje, W., Parthasarathy, P.R.:}
On the convergence to stationarity of the many-server Poisson queue.
\textit{J. Appl. Probab.} 36 , 546-557 (1999).

\bibitem{Th1}
\textsc{Thorisson, H.:}
The coupling of regenerative processes.
\textit{Adv. Appl. Prob.} 15, 531-561 (1983).


\bibitem{Th2}
\textsc{Thorisson, H.:}
The queue $GI/G/1$: finite moments of the cycle variables and uniform rates of convergence.
\textit{Stoch. Proc. Appl.} 19, 85-99 (1985).



\bibitem{Va1}
\textsc{Van Doorn, E. A.:}
Conditions for exponential ergodicity and bounds for the decay parameter of a birth-death process.
\textit{Adv. Appl. Prob.} 17, 514-530 (1985).   


\bibitem{VaZe1}
\textsc{Van Doorn, E. A., Zeifman, A. I.:}
On the speed of convergence to stationarity of the Erlang loss system.
\textsc{Queueing Systems} 63, 241–252 (2009).

\bibitem{VaZePa}
\textsc{Van Doorn, E. A., Zeifman, A. I., Panfilova, T. L.:}
Bounds and asymptotics for the rate of convergence of birth-
death processes.
\textit{Teor. Veroyatnost. I Primenen.} 54, 18-38 (2009).




\end{thebibliography}
\end{document}